%
%
%
%


\documentclass[fleqn,11pt]{SelfArx} 

\usepackage[english]{babel} 
\usepackage{mathpazo}
\usepackage{lipsum} 

\usepackage{amsmath,amssymb, amsthm, mathrsfs}

\usepackage{graphicx}	
\usepackage{subfigure}
\usepackage{float} 
\usepackage{amsfonts, dsfont}
\usepackage{authblk}
\usepackage{algorithm}
\usepackage[noend]{algpseudocode}
\usepackage{tikz,tikz-qtree,tikz-qtree-compat}
\usetikzlibrary{arrows,automata,trees}
\usepackage{etoolbox}
\AtBeginEnvironment{quote}{\small}






\newcommand{\N}{\mathbb{N}}



\newcommand{\frall}{\; \forall \;}
\newcommand{\exs}{\exists \;}
\renewcommand{\epsilon}{\varepsilon}


%

\newtheorem{theorem}{Theorem}
\newtheorem{lemma}{Lemma}

\newtheorem{example}{Example}
\newtheorem{corollary}{Corollary}
\newtheorem{definition}{Definition}





\setlength{\fboxrule}{0.75pt} 


\definecolor{color1}{HTML}{003366} 
\definecolor{color2}{RGB}{170, 170,170} 
\colorlet{color3}{color1!70!}
\definecolor{accent}{RGB}{152,55,64}


\usepackage{hyperref} 
\hypersetup{hidelinks,colorlinks,breaklinks=true,urlcolor=color2,citecolor=color1,linkcolor=color3,bookmarksopen=false,pdftitle={Title},pdfauthor={Author}}


\JournalInfo{Academic Advances of the CTO, Vol. 1, Issue 2, 2017} 
\Archive{Original Research Article} 

\PaperTitle{Vertical Dependency in Sequences of Categorical Random Variables} 

\Authors{Rachel Traylor\footnote{\textit{Office of the CTO, Dell EMC}} , Ph.D. and Jason Hathcock} 

\Keywords{categorical variables --- correlation --- dependency --- probability theory} 


\Abstract{ This paper develops a more general theory of sequences of dependent categorical random variables, extending the works of Korzeniowski (2013) and Traylor (2017) that studied first-kind dependency in sequences of Bernoulli and categorical random variables, respectively. A more natural form of dependency, sequential dependency, is defined and shown to retain the property of identically distributed but dependent elements in the sequence. The cross-covariance of sequentially dependent categorical random variables is proven to decrease exponentially in the dependency coefficient $\delta$ as the distance between the variables in the sequence increases. We then generalize the notion of \textit{vertical dependency} to describe the relationship between a categorical random variable in a sequence and its predecessors, and define a class of generating functions for such dependency structures. The main result of the paper is that any sequence of dependent categorical random variables generated from a function in the class $\mathscr{C}_{\delta}$ that is \textit{dependency continuous} yields identically distributed but dependent random variables. Finally, a graphical interpretation is given and several examples from the generalized vertical dependency class are illustrated.  }


\begin{document}

\flushbottom 

\maketitle 

\tableofcontents 

\thispagestyle{empty} 


\section*{Introduction} 

\addcontentsline{toc}{section}{Introduction} 

Many statistical tools and distributions rely on the independence of a sequence or set of random variables. The sum of independent Bernoulli random variables yields a binomial random variable. More generally, the sum of independent categorical random variables yields a multinomial random variable. Independent Bernoulli trials also form the basis of the geometric and negative binomial distributions, though the focus is on the number of failures before the first (or $r$th success).~\cite{mathstat} In data science, linear regression relies on independent and identically distributed (i.i.d.) error terms, just to name a few examples.

The necessity of independence filters throughout statistics and data science, although real data rarely is actually independent. Transformations of data to reduce multicollinearity (such as principal component analysis) are commonly used before applying predictive models that require assumptions of independence. This paper aims to continue building a formal foundation of dependency among sequences of random variables in order to extend these into generalized distributions that do not rely on mutual independence in order to better model the complex nature of real data. We build on the works of Korzeniowski~\cite{Korzeniowski2013} and Traylor~\cite{traylor} who both studied first-kind (FK) dependence for Bernoulli and categorical random variables, respectively, in order to define a general class of functions that generate dependent sequences of categorical random variables. 

Section~\ref{sec: background} gives a brief review of the original work by Korzeniowski~\cite{Korzeniowski2013} and Traylor~\cite{traylor}. In section~\ref{sec: Sequential dependency}, a new dependency structure, \textit{sequential dependency} is introduced, and the cross-covariance matrix for two sequentially dependent categorical random variables is derived. Sequentially dependent categorical random variables are identically distributed but dependent.  Section~\ref{subsec: vert dep generators id} generalized the notion of \textit{vertical dependency structures} into a class that encapsulates both the first-kind (FK) dependence of Korzeniowski~\cite{Korzeniowski2013} and Traylor~\cite{traylor} and shows that all such sequences of dependent random variables are identically distributed. We also provide a graphical interpretation and illustrations of several examples of vertical dependency structures.  

\section{Background}
\label{sec: background}
We repeat a section from~\cite{traylor} in order to give a review of the original first-kind (FK) dependency created by Korzeniowski. 
\begin{figure}[H]
	\centering
	\begin{tikzpicture}
	\node [below] at (7,0) {\huge$\vdots$};
	\draw[fill] (7,6) circle [radius=1pt];
	\foreach \x in {3,11}{%
		\draw [fill] (\x,4) circle [radius=1pt];
		\draw (7,6) -- (\x,4);
		}
	\foreach \x in {1,5,9,13}{%
		\draw [fill] (\x,2) circle [radius=1pt];}
	\foreach \x in {1,5}{%
		\draw (3,4) -- (\x,2);}
	\foreach \x in {9,13}{%
		\draw (11,4) -- (\x,2);}
	\foreach \x in {0,2,4,6,8,10,12,14}{
		\draw [fill] (\x,0) circle [radius=1pt];}
	\foreach \x in {0,2} {
		\draw (1,2) -- (\x,0);}
	\foreach \x in {4,6} {
		\draw (5,2) -- (\x,0);}
	\foreach \x in {8,10} {
		\draw (9,2) -- (\x,0);}
	\foreach \x in {12,14} {
		\draw (13,2) -- (\x,0);}
	\foreach \x in {0,4,8,12}{%
		\node [left] at (\x,0) {$0$};
		\node [right] at(\x+2,0) {$1$};
		}
	\foreach \x in {1,9}{%
		\node [left] at (\x,2) {$0$};
		\node [right] at (\x+4,2) {$1$};}
	
	\node [left] at (3,4) {$0$};
	\node [right] at (11,4) {$1$};
	\node [left] at (5,5) {$q$};
	\node [right] at (9,5) {$p$};

	\node [left] at (2,3) {$q^+$};
	\node [right] at (4,3) {$p^-$};
	\node [left] at (10,3) {$q^-$};
	\node [right] at (12,3) {$p^+$};

	\node [left] at (.5,1) {$q^+$};
	\node [right] at (1.5,1) {$p^-$};
	\node [left] at (4.5,1) {$q^+$};
	\node [right] at (5.5,1) {$p^-$};
	\node [left] at (8.5,1) {$q^-$};
	\node [right] at (9.5,1) {$p^+$};
	\node [left] at (12.5,1) {$q^-$};
	\node [right] at (13.5,1) {$p^+$};
	\node (e1) [right] at (14.5,4) {\Large\textcolor{accent}{$\varepsilon_1$}};
	\node (e2) [right] at (14.5,2) {\Large\textcolor{accent}{$\varepsilon_2$}};
	\node (e3) [right] at (14.5,0) {\Large\textcolor{accent}{$\varepsilon_3$}};
\end{tikzpicture}
	\caption{First Kind Dependence for Bernoulli Random Variables}
	\label{fig: bin tree orig}
\end{figure}
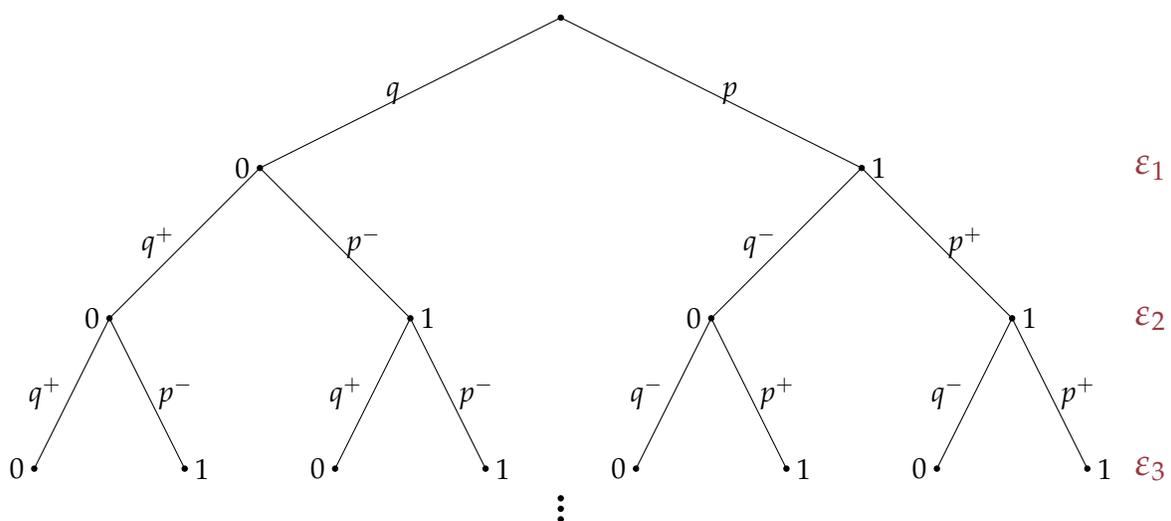

\begin{quote}
Korzeniowski defined the notion of dependence in a way we will refer to here as \textit{dependence of the first kind} (FK dependence). Suppose $(\epsilon_{1},...,\epsilon_{N})$ is a sequence of Bernoulli random variables, and $P(\epsilon_{1} = 1) = p$. Then, for $\epsilon_{i}, i \geq 2$, we weight the probability of each binary outcome toward the outcome of $\epsilon_{1}$, adjusting the probabilities of the remaining outcomes accordingly. 

Formally, let $0 \leq \delta \leq 1$, and $q = 1-p$. Then define the following quantities
\begin{equation}
\begin{aligned}
p^{+} := P(\epsilon_{i} = 1 | \epsilon_{1} = 1) = p + \delta q &\qquad p^{-} := P(\epsilon_{i} = 0 | \epsilon_{1} = 1) = q - \delta q \\
q^{+} := P(\epsilon_{i} = 1 | \epsilon_{1} = 0) = p - \delta p  &\qquad q^{-} := P(\epsilon_{i} = 0 | \epsilon_{1} = 0) = q + \delta p
\end{aligned} 
\end{equation}

Given the outcome $i$ of $\epsilon_{1}$, the probability of outcome $i$ occurring in the subsequent Bernoulli variables $\epsilon_{2}, \epsilon_{3},..., \epsilon_{n}$ is $p^{+}, i = 1$ or $q^{+}, i=0$. The probability of the opposite outcome is then decreased to $q^{-}$ and $p^{-}$, respectively. 

Figure~\ref{fig: bin tree orig} illustrates the possible outcomes of a sequence of such dependent Bernoulli variables. Korzeniowski showed that, despite this conditional dependency, $P(\epsilon_{i} = 1) = p \frall i$. That is, the sequence of Bernoulli variables is identically distributed, with correlation shown to be 
\[Cor(\epsilon_{i}, \epsilon_{j}) = \begin{cases} \delta, & i=1 \\
\delta^{2}, &i \neq j, \quad i,j \geq 2
\end{cases}\]

These identically distributed but correlated Bernoulli random variables yield a Generalized Binomial distribution with a similar form to the standard binomial distribution.
\end{quote}

In~\cite{traylor}, the concept of Bernoulli FK dependence was extended to categorical random variables. That is, given a sequence of categorical random variables with $K$ categories, $P(\epsilon_{1} = i) = p_{i}, i = 1,...,K$. \\
 $P(\epsilon_{j} = i | \epsilon_{1} = i) = p_{i}^{+} = p_{i} + \delta(1-p_{i})$, and 
 $P(\epsilon_{j} = k | \epsilon_{1} = i) = p_{k}^{-} = p_{k} - \delta p_{k}$, $i \neq k$, $k = 1,...,K$.  Traylor proved that FK dependent categorical random variables remained identically distributed, and showed that the cross-covariance matrix of categorical random variables has the same structure as the correlation between FK dependent Bernoulli random variables. In addition, the concept of a generalized binomial distribution was extended to a generalized multinomial distribution. 

In the next section, we will explore a different type of dependency structure, \textit{sequential dependency}.

\section{Sequentially Dependent Categorical Random Variables}
\label{sec: Sequential dependency}	
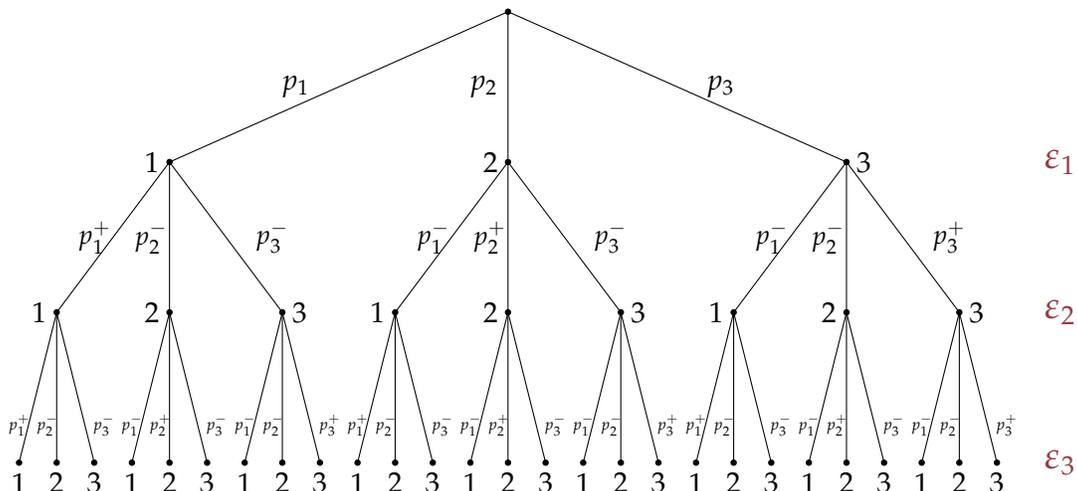
\begin{figure}[H]
	\centering
	\begin{tikzpicture}
\draw[fill] (7.5,6) circle [radius=1pt];
\draw[fill] (3,4) circle [radius=1pt];
\draw[fill] (7.5,4) circle [radius=1pt];
\draw[fill] (12,4) circle [radius=1pt];
\foreach \x in {3,7.5,12}{%
	\draw (7.5,6) -- (\x,4);
}
\draw[fill] (1.5,2) circle [radius=1pt];
\draw[fill] (3,2) circle [radius=1pt];
\draw[fill] (4.5,2) circle [radius=1pt];
\foreach \x in {1.5,3,4.5}{%
	\draw (3,4) -- (\x,2);
}
\draw[fill] (6,2) circle [radius=1pt];
\draw[fill] (7.5,2) circle [radius=1pt];
\draw[fill] (9,2) circle [radius=1pt];
\foreach \x in {6,7.5,9}{%
	\draw (7.5,4) -- (\x,2);
}
\draw[fill] (10.5,2) circle [radius=1pt];
\draw[fill] (12,2) circle [radius=1pt];
\draw[fill] (13.5,2) circle [radius=1pt];
\foreach \x in {10.5,12,13.5}{%
	\draw (12,4) -- (\x,2);
}
\draw[fill] (1,0) circle [radius=1pt];
\draw[fill] (1.5,0) circle [radius=1pt];
\draw[fill] (2,0) circle [radius=1pt];
\foreach \x in {1,1.5,2}{%
	\draw (1.5,2) -- (\x,0);
}
\draw[fill] (2.5,0) circle [radius=1pt];
\draw[fill] (3,0) circle [radius=1pt];
\draw[fill] (3.5,0) circle [radius=1pt];
\foreach \x in {2.5,3,3.5}{%
	\draw (3,2) -- (\x,0);
}
\draw[fill] (4,0) circle [radius=1pt];
\draw[fill] (4.5,0) circle [radius=1pt];
\draw[fill] (5,0) circle [radius=1pt];
\foreach \x in {4,4.5,5}{%
	\draw (4.5,2) -- (\x,0);
}
\draw[fill] (5.5,0) circle [radius=1pt];
\draw[fill] (6,0) circle [radius=1pt];
\draw[fill] (6.5,0) circle [radius=1pt];
\foreach \x in {5.5,6,6.5}{%
	\draw (6,2) -- (\x,0);
}
\draw[fill] (7,0) circle [radius=1pt];
\draw[fill] (7.5,0) circle [radius=1pt];
\draw[fill] (8,0) circle [radius=1pt];
\foreach \x in {7,7.5,8}{%
	\draw (7.5,2) -- (\x,0);
}
\draw[fill] (8.5,0) circle [radius=1pt];
\draw[fill] (9,0) circle [radius=1pt];
\draw[fill] (9.5,0) circle [radius=1pt];
\foreach \x in {8.5,9,9.5}{%
	\draw (9,2) -- (\x,0);
}
\draw[fill] (10,0) circle [radius=1pt];
\draw[fill] (10.5,0) circle [radius=1pt];
\draw[fill] (11,0) circle [radius=1pt];
\foreach \x in {10,10.5,11}{%
	\draw (10.5,2) -- (\x,0);
}
\draw[fill] (11.5,0) circle [radius=1pt];
\draw[fill] (12,0) circle [radius=1pt];
\draw[fill] (12.5,0) circle [radius=1pt];
\foreach \x in {11.5,12,12.5}{%
	\draw (12,2) -- (\x,0);
}
\draw[fill] (13,0) circle [radius=1pt];
\draw[fill] (13.5,0) circle [radius=1pt];
\draw[fill] (14,0) circle [radius=1pt];
\foreach \x in {13,13.5,14}{%
	\draw (13.5,2) -- (\x,0);
}
\node (e1) [right] at (14.5,4) {\Large\textcolor{accent}{$\varepsilon_1$}};
\node (e2) [right] at (14.5,2) {\Large\textcolor{accent}{$\varepsilon_2$}};
\node (e3) [right] at (14.5,0) {\Large\textcolor{accent}{$\varepsilon_3$}};
\node [left] at (5,5) {$p_1$};
\node [left] at (7.5,5) {$p_2$};
\node [right] at (10,5) {$p_3$};

\node at (2,3) {\small$p_1^+$};
\node at (2.75,3) {\small$p_2^-$};
\node [right] at (4,3) {\small$p_3^-$};

\node at (6.5,3) {\small$p_1^-$};
\node at (7.25,3) {\small$p_2^+$};
\node [right] at (8.5,3) {\small$p_3^-$};

\node at (11,3) {\small$p_1^-$};
\node at (11.75,3) {\small$p_2^-$};
\node [right] at (13,3) {\small$p_3^+$};

\foreach \x in {1,5.5,10}{
	\node at (\x,.5) {\tiny$p_1^+$};
	\node at (\x+.5-.125,.5) {\tiny$p_2^-$};
	\node at (\x+1.125,.5) {\tiny$p_3^-$};
	}
\foreach \x in {2.5,7,11.5}{
	\node at (\x,.5) {\tiny$p_1^-$};
	\node at (\x+.5-.125,.5) {\tiny$p_2^+$};
	\node at (\x+1.125,.5) {\tiny$p_3^-$};
	}
\foreach \x in {4,8.5,13}{
		\node at (\x,.5) {\tiny$p_1^-$};
		\node at (\x+.5-.125,.5) {\tiny$p_2^-$};
		\node at (\x+1.125,.5) {\tiny$p_3^+$};
	}
\foreach \x in {1,2.5,4,5.5,7,8.5,10,11.5,13}{%
	\node [below] at (\x,0) {$1$};
}
\foreach \x in {1.5,3,4.5,6,7.5,9,10.5,12,13.5}{%
	\node [below] at (\x,0) {$2$};
}
\foreach \x in {2,3.5,5,6.5,8,9.5,11,12.5,14}{%
	\node [below] at (\x,0) {$3$};
}
\node [left] at (3,4) {$1$};
\node [left] at (7.5,4) {$2$};
\node [right] at (12,4) {$3$};
\foreach \x in {1.5,6,10.5}{%
	\node [left] at (\x,2) {$1$};
}
\foreach \x in {3,7.5,12}{%
	\node [left] at (\x,2) {$2$};
}
\foreach \x in {4.5,9,13.5}{%
	\node [right] at (\x,2) {$3$};
}
\end{tikzpicture}
	\caption{Probability Mass Flow of Sequentially Dependent Categorical Random Variables, $K=3$.}
	\label{fig: seq dep tree}
\end{figure}

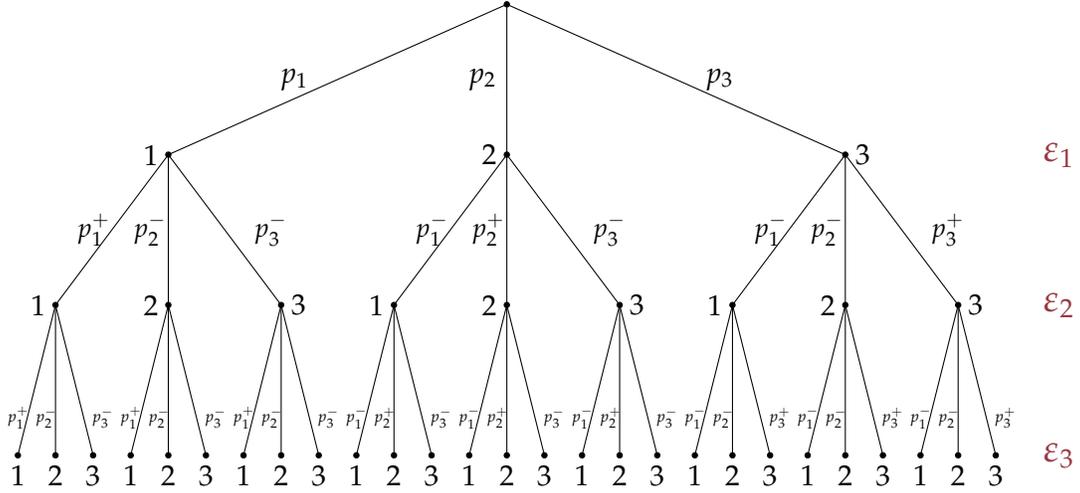
\begin{figure}[H]
	\centering
	\begin{tikzpicture}
	\draw[fill] (7.5,6) circle [radius=1pt];
	\draw[fill] (3,4) circle [radius=1pt];
	\draw[fill] (7.5,4) circle [radius=1pt];
	\draw[fill] (12,4) circle [radius=1pt];
	\foreach \x in {3,7.5,12}{%
		\draw (7.5,6) -- (\x,4);
		}
	\draw[fill] (1.5,2) circle [radius=1pt];
	\draw[fill] (3,2) circle [radius=1pt];
	\draw[fill] (4.5,2) circle [radius=1pt];
	\foreach \x in {1.5,3,4.5}{%
		\draw (3,4) -- (\x,2);
		}
	\draw[fill] (6,2) circle [radius=1pt];
	\draw[fill] (7.5,2) circle [radius=1pt];
	\draw[fill] (9,2) circle [radius=1pt];
	\foreach \x in {6,7.5,9}{%
		\draw (7.5,4) -- (\x,2);
		}
	\draw[fill] (10.5,2) circle [radius=1pt];
	\draw[fill] (12,2) circle [radius=1pt];
	\draw[fill] (13.5,2) circle [radius=1pt];
	\foreach \x in {10.5,12,13.5}{%
		\draw (12,4) -- (\x,2);
		}
	\draw[fill] (1,0) circle [radius=1pt];
	\draw[fill] (1.5,0) circle [radius=1pt];
	\draw[fill] (2,0) circle [radius=1pt];
	\foreach \x in {1,1.5,2}{%
		\draw (1.5,2) -- (\x,0);
		}
	\draw[fill] (2.5,0) circle [radius=1pt];
	\draw[fill] (3,0) circle [radius=1pt];
	\draw[fill] (3.5,0) circle [radius=1pt];
	\foreach \x in {2.5,3,3.5}{%
		\draw (3,2) -- (\x,0);
		}
	\draw[fill] (4,0) circle [radius=1pt];
	\draw[fill] (4.5,0) circle [radius=1pt];
	\draw[fill] (5,0) circle [radius=1pt];
	\foreach \x in {4,4.5,5}{%
		\draw (4.5,2) -- (\x,0);
		}
	\draw[fill] (5.5,0) circle [radius=1pt];
	\draw[fill] (6,0) circle [radius=1pt];
	\draw[fill] (6.5,0) circle [radius=1pt];
	\foreach \x in {5.5,6,6.5}{%
		\draw (6,2) -- (\x,0);
		}
	\draw[fill] (7,0) circle [radius=1pt];
	\draw[fill] (7.5,0) circle [radius=1pt];
	\draw[fill] (8,0) circle [radius=1pt];
	\foreach \x in {7,7.5,8}{%
		\draw (7.5,2) -- (\x,0);
		}
	\draw[fill] (8.5,0) circle [radius=1pt];
	\draw[fill] (9,0) circle [radius=1pt];
	\draw[fill] (9.5,0) circle [radius=1pt];
	\foreach \x in {8.5,9,9.5}{%
		\draw (9,2) -- (\x,0);
		}
	\draw[fill] (10,0) circle [radius=1pt];
	\draw[fill] (10.5,0) circle [radius=1pt];
	\draw[fill] (11,0) circle [radius=1pt];
	\foreach \x in {10,10.5,11}{%
		\draw (10.5,2) -- (\x,0);
		}
	\draw[fill] (11.5,0) circle [radius=1pt];
	\draw[fill] (12,0) circle [radius=1pt];
	\draw[fill] (12.5,0) circle [radius=1pt];
	\foreach \x in {11.5,12,12.5}{%
		\draw (12,2) -- (\x,0);
		}
	\draw[fill] (13,0) circle [radius=1pt];
	\draw[fill] (13.5,0) circle [radius=1pt];
	\draw[fill] (14,0) circle [radius=1pt];
	\foreach \x in {13,13.5,14}{%
		\draw (13.5,2) -- (\x,0);
		}
	\node (e1) [right] at (14.5,4) {\Large\textcolor{accent}{$\varepsilon_1$}};
	\node (e2) [right] at (14.5,2) {\Large\textcolor{accent}{$\varepsilon_2$}};
	\node (e3) [right] at (14.5,0) {\Large\textcolor{accent}{$\varepsilon_3$}};
	\node [left] at (5,5) {$p_1$};
	\node [left] at (7.5,5) {$p_2$};
	\node [right] at (10,5) {$p_3$};

	\node at (2,3) {\small$p_1^+$};
	\node at (2.75,3) {\small$p_2^-$};
	\node [right] at (4,3) {\small$p_3^-$};

	\node at (6.5,3) {\small$p_1^-$};
	\node at (7.25,3) {\small$p_2^+$};
	\node [right] at (8.5,3) {\small$p_3^-$};

	\node at (11,3) {\small$p_1^-$};
	\node at (11.75,3) {\small$p_2^-$};
	\node [right] at (13,3) {\small$p_3^+$};

	\foreach \x in {1,2.5,4}{
		\node at (\x,.5) {\tiny$p_1^+$};
		\node at (\x+.5-.125,.5) {\tiny$p_2^-$};
		\node at (\x+1.125,.5) {\tiny$p_3^-$};}
\foreach \x in {5.5,7,8.5}{
	\node at (\x,.5) {\tiny$p_1^-$};
	\node at (\x+.5-.125,.5) {\tiny$p_2^+$};
	\node at (\x+1.125,.5) {\tiny$p_3^-$};}
\foreach \x in {10,11.5,13}{
	\node at (\x,.5) {\tiny$p_1^-$};
	\node at (\x+.5-.125,.5) {\tiny$p_2^-$};
	\node at (\x+1.125,.5) {\tiny$p_3^+$};}
	\foreach \x in {1,2.5,4,5.5,7,8.5,10,11.5,13}{%
		\node [below] at (\x,0) {$1$};
		}
	\foreach \x in {1.5,3,4.5,6,7.5,9,10.5,12,13.5}{%
		\node [below] at (\x,0) {$2$};
		}
	\foreach \x in {2,3.5,5,6.5,8,9.5,11,12.5,14}{%
		\node [below] at (\x,0) {$3$};
		}
	\node [left] at (3,4) {$1$};
	\node [left] at (7.5,4) {$2$};
	\node [right] at (12,4) {$3$};
	\foreach \x in {1.5,6,10.5}{%
		\node [left] at (\x,2) {$1$};
		}
	\foreach \x in {3,7.5,12}{%
		\node [left] at (\x,2) {$2$};
		}
	\foreach \x in {4.5,9,13.5}{%
		\node [right] at (\x,2) {$3$};
		}
\end{tikzpicture}
	\caption{Probability Mass Flow of FK Dependent Categorical Random Variables, $K=3$.}
	\label{fig: fk dep tree}
\end{figure}

While FK dependence yielded some interesting results, a more realistic type of dependence is \textit{sequential dependence}, where the outcome of a categorical random variable depends with coefficient $\delta$ on the outcome of the variable immediately preceeding it in the sequence. Put formally, if we let $\mathcal{F}_{n} = \{\epsilon_{1},...,\epsilon_{n-1}\}$, then $P(\epsilon_{n}|\epsilon_{1},...,\epsilon_{n-1}) = P(\epsilon_{n}|\epsilon_{n-1}) \neq P(\epsilon_{n})$. That is, $\epsilon_{n}$ only has direct dependence on the previous variable $\epsilon_{n-1}$. We keep the same weighting as for FK-dependence. That is,
\begin{align}
P(\epsilon_{n} = j | \epsilon_{n-1} = j) = p_{j}^{+} = p_{j} + \delta(1-p_{j}), &\qquad P(\epsilon_{n} = j | \epsilon_{n-1} = i) = p_{j}^{-} = p_{j}-\delta p_{j}; j = 1,...,K, i \neq j
\end{align}

As a comparison, for FK dependence, $P(\epsilon_{n}|\epsilon_{1},...,\epsilon_{n-1}) = P(\epsilon_{n}|\epsilon_{1}) \neq P(\epsilon_{n})$. That is, $\epsilon_{n}$ only has direct dependence on $\epsilon_{1}$, and 

\begin{align}
P(\epsilon_{n} = j | \epsilon_{1} = j) = p_{j}^{+} = p_{j} + \delta(1-p_{j}), &\qquad P(\epsilon_{n} = j | \epsilon_{1} = i) = p_{j}^{-} = p_{j}-\delta p_{j}; j = 1,...,K, i \neq j
\end{align}

Let $\epsilon = (\epsilon_{1},...,\epsilon_{n})$ be a sequence of categorical random variables of length $n$ (either independent or dependent) where the number of categories for all $\epsilon_{i}$ is $K$. Denote $\Omega_{n}^{K}$ as the sample space of this random sequence. For example,

	 \[\Omega_{3}^{3} = \{(1,1,1), (1,1,2), (1,1,3), (1,2,1), (1,2,2),...,(3,3,1),(3,3,2),(3,3,3)\}\]

Dependency structures like FK-dependence and sequential dependence change the probability of a sequence $\epsilon$ of length $n$ taking a particular $\omega = (\omega_{1},...,\omega_{n}) \in \Omega_{n}^{K}$. The probability of a particular $\omega \in \Omega_{n}^{K}$ is given by the dependency structure. For example, if the variables are independent, $P((1,2,1)) = p_{1}^{2}p_{2}$. Under FK-dependence, $P((1,2,1)) = p_{1}p_{2}^{-}p_{1}^{+}$, and under sequential dependence, $P((1,2,1)) = p_{1}p_{2}^{-}p_{1}^{-}$. 
See Figures~\ref{fig: seq dep tree} and ~\ref{fig: fk dep tree} for a comparison of the probability mass flows of sequential dependence and FK dependence. 
Sequentially dependent sequences of categorical random variables remain identically distributed but dependent, just like FK-dependent sequences. That is,

\begin{lemma}
	Let $\epsilon = (\epsilon_{1},...,\epsilon_{n})$ be a sequentially dependent categorical sequence of length $n$ with $K$ categories. Then $P(\epsilon_{j} = i) = p_{i}; \quad i = 1,...,K;\quad j = 1,...,n; n \in \N$.
\label{lemma: identically distributed seq dep cat vars}
\end{lemma}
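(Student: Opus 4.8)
The plan is to argue by induction on the index $j$, exploiting the Markov-type structure $P(\epsilon_n \mid \epsilon_1,\dots,\epsilon_{n-1}) = P(\epsilon_n \mid \epsilon_{n-1})$ that defines sequential dependence, so that only a single one-step transition needs to be controlled at each stage. The base case $j = 1$ holds by definition, since $P(\epsilon_1 = i) = p_i$ for $i = 1,\dots,K$.

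For the inductive step, suppose $P(\epsilon_{n-1} = i) = p_i$ for every $i$. Condition on the value of $\epsilon_{n-1}$ and apply the law of total probability together with the sequential-dependence weights, noting that $P(\epsilon_n = j \mid \epsilon_{n-1} = i)$ equals $p_j^{+}$ when $i = j$ and $p_j^{-}$ when $i \ne j$:
\[
P(\epsilon_n = j) = \sum_{i=1}^{K} P(\epsilon_n = j \mid \epsilon_{n-1} = i)\, P(\epsilon_{n-1} = i) = p_j^{+} p_j + p_j^{-} \sum_{i \ne j} p_i .
\]
Substituting $p_j^{+} = p_j + \delta(1-p_j)$, $p_j^{-} = p_j - \delta p_j$, and $\sum_{i \ne j} p_i = 1 - p_j$, the terms carrying $\delta$ cancel and the right-hand side collapses to $p_j^2 + p_j(1-p_j) = p_j$. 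This closes the induction and gives $P(\epsilon_j = i) = p_i$ for all $j = 1,\dots,n$ and $n \in \N$.

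I do not expect a genuine obstacle; the only point requiring care is to verify that $P(\epsilon_n = j \mid \epsilon_{n-1} = i)$ depends on the history only through whether $i = j$, and in particular not on $n$ — which is precisely what the definition of sequential dependence supplies. Compared with the FK case treated in~\cite{traylor}, the argument is actually lighter, since there one must propagate $P(\epsilon_n = j \mid \epsilon_1 = i)$ through all the intermediate variables, whereas here the Markov property reduces the whole computation to one transition step. It is worth remarking that the same calculation exhibits $(p_1,\dots,p_K)$ as a stationary vector of the transition matrix whose diagonal entries are $p_j^{+}$ and off-diagonal entries $p_j^{-}$, which is the structural reason the marginal law is preserved along the sequence.
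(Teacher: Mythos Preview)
Your proof is correct and follows essentially the same approach as the paper: both argue by induction on the index, condition on the value of the immediately preceding variable, and reduce to the identity $p_j^{+}p_j + p_j^{-}\sum_{i\neq j}p_i = p_j$. Your presentation is in fact slightly more streamlined, since the paper verifies $n=2$ and $n=3$ explicitly before stating the inductive step, whereas you correctly observe that the base case $j=1$ and a single one-step computation suffice; your closing remark identifying $(p_1,\dots,p_K)$ as a stationary vector of the transition matrix is a nice structural observation not made explicit in the paper.
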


\begin{proof}
	Fix $n \in \N$. Let $\Phi_{n}^{K}(i) = \{\omega \in \Omega_{n}^{K} : \omega_{n} = i\}, i = 1,...,K$. Note that $|\Phi_{n}^{K}(i)| = K^{n-1}$. Then we may partition $\Omega_{n}^{K}$ using these $\Phi_{n}^{K}(i)$: $\Omega_{n}^{K} = \sqcup_{i=1}^{K} \Phi_{n}^{K}(i)$.\footnote{The union is disjoint}
	
	The event that $\epsilon_{n} = i$ is given by $\bigcup_{\omega \in \Phi_{n}^{K}(i)}\omega$, and thus $P(\epsilon_{n} = i) = P\left(\bigcup_{\omega \in \Phi_{n}^{K}(i)}\omega\right), i = 1,...,K$. Each sequence $\omega_{j} \in \Phi_{n}^{K}(i)$ has an associated probability $\pi_{j} = \prod_{l=1}^{n}\pi_{j_{l}}$, where $\pi_{j_{l}}$ is the probability of the $l$th term in the sequence $\omega_{j_l}$. Therefore, 
	\begin{equation}
		P(\epsilon_{n} = i) = \sum_{j=1}^{K^{n-1}}\pi_{j} = \sum_{j=1}^{K^{n-1}}\prod_{l=1}^{n}\pi_{j_{l}}
		\label{eq: seq dep id proof prob}
	\end{equation}
	
	WLOG, assume $i=1$. Then for $n=2$, under sequential dependence, 
		\[P(\epsilon_{2} = 1) = p_{1}p_{1}^{+} + p_{2}p_{1}^{-} + ... + p_{K}p_{1}^{-} = p_{1}\]
	by Lemma 1 of~\cite{traylor}. For $n=3$, 
	\begin{align*}
		P(\epsilon_{3} = 1) &= p_{1}p_{1}^{+}p_{1}^{+} + p_{1}p_{2}^{-}p_{1}^{-} +p_{1}p_{K}^{-}p_{1}^{-} + ... + p_{K}p_{1}^{-}p_{1}^{+} + ... + p_{K}p_{K}^{+}p_{1}^{-} \\
								&= p_{1}^{+}\left(p_{1}p_{1}^{+} + p_{1}^{-}\sum_{j \neq 1}p_{j}\right) + \sum_{i =2}^{K}p_{1}^{-}\left(p_{i}p_{i}^{+} + p_{i}^{-}\sum_{j\neq i}p_{j}\right) \\
								&= p_{1}^{+}p_{1} + p_{1}^{-}\sum_{i=2}^{K}p_{i} \\
								&= p_{1}
	\end{align*}
	
	It is clear that these hold true for $i=2,...,K$. That is, $P(\epsilon_{2} = i) = p_{i} \frall i$ and $P(\epsilon_{3} = i) = p_{i} \frall i$. Now, assume that $P(\epsilon_{n} = i) = p_{i}, i = 1,...,K$. Then, WLOG, we will show that $P(\epsilon_{n+1} = 1) = p_{1}$. 
	We have that $\Phi_{n+1}^{K}(1) = \Omega_{n} \times \{1\}= \left(\sqcup_{i=1}^{K}\Phi_{n}^{K}(i)\right) \times \{1\} = \sqcup_{i=1}^{K}\left(\Phi_{n}^{K}(i) \times \{1\}\right)$
	Therefore,
	\begin{align*}
	P(\epsilon_{n+1} = 1) &= P(\Phi_{n+1}^{K}(1)) \\
									&= \sum_{i=1}^{K}P\left(\Phi_{n}^{K}(i) \times \{1\}\right) \\
									&= p_{1}^{+}P(\Phi_{n}^{K}(i)) + p_{1}^{-}\sum_{i=2}^{K}P(\Phi_{n}^{K}(i)) \\
									&= p_{1}^{+}p_{1} + p_{1}^{-}\sum_{i=2}^{K}p_{i} \\
									&= p_{1}
	\end{align*}
	A similar argument follows for $i=2,....,K$ to conclude that for any $n$, $P(\epsilon_{n} = i) = p_{i}$ under sequential dependence.
\end{proof}

\subsection{Cross-Covariance Matrix}

The $K \times K$ cross-covariance matrix $\Lambda^{m,n}$ of $\epsilon_{m}$ and $\epsilon_{n}$ in a sequentially dependent categorical sequence has entries $\Lambda_{i,j}^{m,n}$ that give the cross-covariance $Cov([\epsilon_{m} = i], [\epsilon_{n} = j])$, where $[\cdot]$ denotes an Iverson bracket. In the FK-dependent case, the entries of $\Lambda^{m,n}$ are given by~\cite{traylor}

	$\Lambda^{1,n}_{ij} = \begin{cases}
\delta p_{i}(1-p_{i}), & i = j \\
-\delta p_{i}p_{j}, & i \neq j
\end{cases}$, $n \geq 2$, and $\Lambda^{m,n}_{ij} = \begin{cases}
\delta^{2}p_{i}(1-p_{i}), & i = j \\
-\delta^{2}p_{i}p_{j}, & i \neq j
\end{cases}$, $n > m,$ $m\neq 1$.

Thus, the cross covariance between any two $\epsilon_{m}$ and $\epsilon_{n}$ in the FK-dependent case is never smaller than $\delta^{2}$ times the independent cross-covariance. In the sequentially dependent case, the cross-covariances of $\epsilon_{m}$ and $\epsilon_{n}$ decrease in powers of $\delta$ as the distance between the two variables in the sequence increases. 
\begin{theorem}[Cross-Covariance of Dependent Categorical Random Variables]
	Denote $\Lambda^{m,n}$ as the $K \times K$ cross-covariance matrix of $\epsilon_{m}$ and $\epsilon_{n}$ in a sequentially dependent sequence of categorical random variables of length $N$, $m \leq n$, and $n \leq N$, defined as $\Lambda^{m,n} = E[(\epsilon_{m} - E[\epsilon_{m}])(\epsilon_{n} - E[\epsilon_{n}])]$. Then the entries of the matrix are given by
	$\Lambda^{m,n}_{ij} = \begin{cases}
	\delta^{n-m}p_{i}(1-p_{i}), & i = j \\
	-\delta^{n-m} p_{i}p_{j}, & i \neq j
	\end{cases}$
\label{thm: CCV categorical}
\end{theorem}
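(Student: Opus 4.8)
The plan is to use the defining property of sequential dependence — that $(\epsilon_1,\dots,\epsilon_N)$ is a time-homogeneous Markov chain — to reduce each covariance entry to a multi-step transition probability, and then to evaluate those transition probabilities in closed form by exploiting the rank-one-plus-scalar structure of the one-step transition matrix.

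First I would assemble the one-step transition matrix $T\in\R^{K\times K}$ with entries $T_{ij}:=P(\epsilon_{k+1}=j\mid\epsilon_{k}=i)$. By the weighting that defines sequential dependence, $T_{ii}=p_i^{+}=p_i+\delta(1-p_i)$ and $T_{ij}=p_j^{-}=p_j-\delta p_j$ for $i\neq j$, which can be written compactly as $T=\delta I+(1-\delta)P$, where $P:=\mathbf{1}p^{\mathsf T}$ is the matrix each of whose rows equals the base distribution $p=(p_1,\dots,p_K)$. The crucial structural fact is that $P$ is idempotent: since $\sum_{i}p_i=1$ we have $P^{2}=\mathbf{1}(p^{\mathsf T}\mathbf{1})p^{\mathsf T}=\mathbf{1}p^{\mathsf T}=P$, and $P$ commutes with $I$. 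A one-line induction on $r$ then yields $T^{r}=\delta^{r}I+(1-\delta^{r})P$, i.e.
\[
P(\epsilon_{n}=j\mid\epsilon_{m}=i)=(T^{\,n-m})_{ij}=\delta^{\,n-m}[i=j]+(1-\delta^{\,n-m})\,p_j .
\]
Passing from the one-step weights to this $(n-m)$-step formula is the Chapman--Kolmogorov step: summing the product of one-step probabilities over all intermediate outcomes $\epsilon_{m+1},\dots,\epsilon_{n-1}$ telescopes into the matrix power, and this telescoping is licensed precisely by the Markov identity $P(\epsilon_{k}\mid\epsilon_{1},\dots,\epsilon_{k-1})=P(\epsilon_{k}\mid\epsilon_{k-1})$ together with time-homogeneity of the weights. (As a sanity check, $p^{\mathsf T}T=p^{\mathsf T}$, so $p$ is stationary, reconfirming Lemma~\ref{lemma: identically distributed seq dep cat vars}.)

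With this in hand the conclusion is immediate. By Lemma~\ref{lemma: identically distributed seq dep cat vars}, $E[[\epsilon_{m}=i]]=p_i$ for every $m$, so
\[
\Lambda^{m,n}_{ij}=P(\epsilon_{m}=i,\ \epsilon_{n}=j)-p_ip_j
= p_i\,(T^{\,n-m})_{ij}-p_ip_j
= \delta^{\,n-m}\bigl(p_i[i=j]-p_ip_j\bigr),
\]
which is $\delta^{\,n-m}p_i(1-p_i)$ when $i=j$ and $-\delta^{\,n-m}p_ip_j$ when $i\neq j$. The degenerate case $m=n$ needs no separate treatment, since $[\epsilon_{m}=i][\epsilon_{m}=j]=[i=j]\,[\epsilon_{m}=i]$ and $\delta^{0}=1$ make the same formula hold.

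I do not expect a genuine obstacle: the computation is routine linear algebra once the transition matrix is identified, and the only point that deserves a careful sentence is the reduction to matrix powers — verifying that conditioning on the full past $\epsilon_{1},\dots,\epsilon_{m}$ and then marginalizing over the path to $\epsilon_{n}$ really does produce $(T^{\,n-m})_{ij}$; this is exactly where sequential dependence is used, in contrast to FK dependence, whose path-non-Markov structure produces the $\delta$-versus-$\delta^{2}$ split recorded just before the theorem. If one prefers to avoid matrices, the identical result follows by inducting on $n-m$: conditioning on $\epsilon_{n-1}$ and applying the weights gives the affine recursion $P(\epsilon_{n}=j\mid\epsilon_{m}=i)=p_j(1-\delta)+\delta\,P(\epsilon_{n-1}=j\mid\epsilon_{m}=i)$ (using $p_j^{+}-p_j^{-}=\delta$ and $p_j^{-}=p_j(1-\delta)$), which unwinds from the initial condition $P(\epsilon_{m}=j\mid\epsilon_{m}=i)=[i=j]$ to $\delta^{\,n-m}[i=j]+(1-\delta^{\,n-m})p_j$; the matrix formulation merely packages this bookkeeping.
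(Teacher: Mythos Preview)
Your proof is correct and takes a genuinely different --- and considerably more economical --- route than the paper's. The paper's argument proceeds by direct combinatorial enumeration: it partitions the event $\{\epsilon_1=i,\ \epsilon_n=i\}$ into nested pieces indexed by the first few coordinates of the path, computes each piece by hand for $n=2,3,4,5$, takes $n=6$ as the true base case, and then runs an induction that carries four separate inductive hypotheses through the partition; the off-diagonal entries and the case $m>1$ are dispatched afterward as ``similar.'' You sidestep all of this by recognizing the sequence as a time-homogeneous Markov chain with one-step transition matrix $T=\delta I+(1-\delta)\mathbf{1}p^{\mathsf T}$, whose powers close in one line via the idempotency of $\mathbf{1}p^{\mathsf T}$, giving the $(n-m)$-step transition probability and hence every entry $\Lambda^{m,n}_{ij}$ simultaneously. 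What your approach buys is uniformity and brevity: a single two-line identity replaces several pages of set-by-set bookkeeping, and the diagonal, off-diagonal, $m=1$, and $m>1$ cases all fall out at once. The paper's approach, by contrast, stays closer to the probability-tree picture and never names the transition matrix explicitly, at the cost of a much longer and more delicate induction; its one minor advantage is that it never appeals to matrix algebra, though your alternative scalar recursion already covers that preference.
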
 

The pairwise covariance between two Bernoulli variables in a sequentially dependent sequence is given in the following corollary
\begin{corollary}
	Denote $P(\epsilon_{i} = 1) = p; i = 1,...,n$, and let $q=1-p$. Under sequential dependence,\\ $Cov(\epsilon_{m}, \epsilon_{n}) = pq\delta^{n-m}.$
\label{cor: CCV bernoulli}
\end{corollary}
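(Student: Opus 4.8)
\emph{Proof strategy.} Since a $\{0,1\}$-valued random variable coincides with its own indicator of the outcome $1$, the corollary is merely the diagonal entry of Theorem~\ref{thm: CCV categorical} corresponding to that outcome in the case $K=2$ with the two outcome probabilities $p$ and $q$: $\operatorname{Cov}(\epsilon_m,\epsilon_n)=\operatorname{Cov}([\epsilon_m=1],[\epsilon_n=1])=\delta^{n-m}p(1-p)=pq\,\delta^{n-m}$. So the substantive task is the theorem, and I describe how I would prove that. The first move is to turn each matrix entry into a joint probability: by definition $\Lambda^{m,n}_{ij}=\operatorname{Cov}([\epsilon_m=i],[\epsilon_n=j])=P(\epsilon_m=i,\,\epsilon_n=j)-P(\epsilon_m=i)\,P(\epsilon_n=j)$, and Lemma~\ref{lemma: identically distributed seq dep cat vars} supplies the marginals $P(\epsilon_m=i)=p_i$ and $P(\epsilon_n=j)=p_j$. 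Everything therefore reduces to computing the joint mass $P(\epsilon_m=i,\epsilon_n=j)$.

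To get the joint mass I would exploit the Markov structure built into sequential dependence. The relation $P(\epsilon_k=\cdot\mid\epsilon_1,\dots,\epsilon_{k-1})=P(\epsilon_k=\cdot\mid\epsilon_{k-1})$, combined with the fact that the one-step weights $p_j^{+},p_j^{-}$ do not depend on the position $k$, shows that $(\epsilon_k)$ is a time-homogeneous Markov chain on $\{1,\dots,K\}$ whose transition matrix $M$ has $M_{ii}=p_i^{+}=p_i+\delta(1-p_i)$ and $M_{ij}=p_j^{-}=p_j(1-\delta)$ for $i\neq j$. By the Chapman--Kolmogorov equations, $P(\epsilon_n=j\mid\epsilon_m=i)=(M^{\,n-m})_{ij}$, so that $P(\epsilon_m=i,\epsilon_n=j)=p_i\,(M^{\,n-m})_{ij}$ (the degenerate case $p_i=0$ needs no argument, since both sides of the claimed identity vanish).

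The step I expect to be the real work is computing the matrix power $M^{\,n-m}$, and the efficient way to do it is to recognize the algebraic form of $M$: writing $\mathbf{p}=(p_1,\dots,p_K)^{\mathsf T}$ and $\mathbf{1}$ for the all-ones column, one has $M=\delta I+(1-\delta)\,\mathbf{1}\mathbf{p}^{\mathsf T}$. Since $\mathbf{p}^{\mathsf T}\mathbf{1}=1$, the matrix $Q:=\mathbf{1}\mathbf{p}^{\mathsf T}$ is idempotent, so $M=\delta(I-Q)+Q$ is built from the complementary idempotents $I-Q$ and $Q$; hence $M^{k}=\delta^{k}(I-Q)+Q=\delta^{k}I+(1-\delta^{k})\,\mathbf{1}\mathbf{p}^{\mathsf T}$, which reads entrywise as $(M^{k})_{ii}=p_i+\delta^{k}(1-p_i)$ and $(M^{k})_{ij}=p_j(1-\delta^{k})$ for $i\neq j$. (A reader preferring to avoid the matrix identity can obtain the same formula for $M^{k}$ by induction on $k$, splitting $M^{k+1}=M^kM$ into its diagonal and off-diagonal parts; the computation is routine but opaque, which is why spotting that $Q$ is a projection is the crux.)

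Taking $k=n-m$ and substituting back completes the proof: for $i=j$, $\Lambda^{m,n}_{ii}=p_i\big(p_i+\delta^{n-m}(1-p_i)\big)-p_i^{2}=\delta^{n-m}p_i(1-p_i)$, while for $i\neq j$, $\Lambda^{m,n}_{ij}=p_i\,p_j(1-\delta^{n-m})-p_ip_j=-\delta^{n-m}p_ip_j$, which is exactly the asserted form (and it correctly degenerates to the covariance matrix of a single categorical variable when $m=n$, since $\delta^{0}=1$). Specializing to $K=2$ then yields the corollary, as noted at the outset.
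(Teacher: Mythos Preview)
Your argument is correct and takes a genuinely different route from the paper. The paper's Appendix proof of Theorem~\ref{thm: CCV categorical} proceeds by a direct combinatorial attack: it partitions the event $\{\epsilon_1=i,\epsilon_n=i\}$ into pieces indexed by the first two or three coordinates of $\omega$, computes $n=2,\dots,5$ by hand, takes $n=6$ as the base case, and then runs an induction with four separate inductive hypotheses and a good deal of bookkeeping; the $m>1$ case is handled afterward by a further decomposition over $\Omega_m$. By contrast, you observe that sequential dependence is precisely a time-homogeneous Markov chain with transition matrix $M=\delta I+(1-\delta)\mathbf{1}\mathbf{p}^{\mathsf T}$, diagonalize $M$ via the idempotent $Q=\mathbf{1}\mathbf{p}^{\mathsf T}$ to get $M^{k}=\delta^{k}I+(1-\delta^{k})Q$ in one line, and read off all joint probabilities $P(\epsilon_m=i,\epsilon_n=j)=p_i(M^{n-m})_{ij}$ at once. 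Your approach is shorter, treats the diagonal and off-diagonal entries and all values of $m$ simultaneously, and makes transparent \emph{why} the exponent is $n-m$ (it is the nontrivial eigenvalue $\delta$ of $M$ raised to the lag); the paper's approach, while elementary, obscures this structure behind the set partitions and arithmetic.
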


We move the proof of Theorem~\ref{thm: CCV categorical} to Section~\ref{sec: Appendix}.  We give some examples to illustrate. 

\begin{example}[Bernoulli Random Variables]
	If we want to find the covariance between $\epsilon_{2}$ and $\epsilon_{3}$, then we note that the set $S = \{\omega \in \Omega_{3}^{2} : \omega_{2} = 1, \omega_{3} = 1\}$ is given by $S = \{(1,1,1),(0,1,1)\}$. $P(\epsilon_{2} = 1, \epsilon_{3} = 1) = P(S)$. Thus,
	\begin{align*}
		Cov(\epsilon_{2},\epsilon_{3}) &= P(\epsilon_{2} = 1, \epsilon_{3} = 1) - P(\epsilon_{2} = 1)P(\epsilon_{3} = 1) \\
			&= pp^{+}p^{+} + qp^{-}p^{+} - p^{2} \\
			&= p^{+}(pp^{+} + qp^{-}) - p^{2} \\
			&= p(p^{+} - p) \\
			&= pq\delta
	\end{align*}
	
	Similarly, 
  \begin{align*}
  Cov(\epsilon_{1}, \epsilon_{3}) &= P(\epsilon_{1} = 1, \epsilon_{3} = 1) - P(\epsilon_{1} = 1)P(\epsilon_{3} = 1)\\
  												&= (pp^{+}p^{+} + pq^{-}p^{-}) - p^{2} \\
  												&= p((p+\delta q)^{2} + pq(1-\delta)^{2}) - p^{2} \\
  												&= p(p^{2} + pq + \delta^{2}q^{2} + \delta^{2}pq) - p^{2} \\
  												&= p(p(p+q) + \delta^{2}q(q+p)) - p^{2} \\
  												&= p(p + \delta^{2}q) - p^{2} \\
  												&= pq\delta^{2}
  \end{align*}

\end{example}

\begin{example}[Categorical Random Variables]
	Suppose we have a sequence of categorical random variables, where $K =3$. Then 
	$[\epsilon_{m} = i]$ is the Bernoulli random variable with the binary outcome of $1$ if $\epsilon_{m} = i$ and 0 if not. Thus,
	$Cov([\epsilon_{m} = i], [\epsilon_{n} = j]) = P(\epsilon_{m} = i \vee \epsilon_{n} = j) - P(\epsilon_{m} = i)P(\epsilon_{n} = j)$.
	We have shown that every $\epsilon_{n}$ in the sequence is identically distributed, so $P(\epsilon_{m} = i) = p_{i}$ and $P(\epsilon_{n} = j) = 	p_{j}$. 
	
	\begin{align*}
	Cov([\epsilon_{2} = 1], [\epsilon_{3} = 1]) &= (p_{1}p_{1}^{+}p_{1}^{+} + p_{2}p_{1}^{-}p_{1}^{+} + p_{3}p_{1}^{-}p{1}^{+}) - p_{1}^{2} \\
																&= p_{1}^{+}(p_{1}p_{1}^{+} + p_{2}p_{1}^{-} + p_{3}p_{1}^{-}) - p_{1}^{2}  \\
																&= p_{1}^{+}p_{1} - p_{1}^{2} \\
																&= \delta p_{1}(1-p_{1})
	\end{align*}
	
	\begin{align*}
	Cov([\epsilon_{2} = 1], [\epsilon_{3} = 2]) &= (p_{1}p_{1}^{+}p_{2}^{-} + p_{2}p_{1}^{-}p_{2}^{-}+p_{3}p_{1}^{-}p_{2}^{-}) - p_{1}p_{2} \\		
				&= p_{2}^{-}(p_{1}p_{1}^{+} + p_{2}p_{1}^{-} + p_{3}p_{1}^{-})  - p_{1}p_{2}\\
				&= p_{1}p_{2}^{-} - p_{1}p_{2} \\
				&= -\delta p_{1}p_{2}
	\end{align*}
	We may obtain the other entries of the matrix in a similar fashion. So, the cross-covariance matrix for a $\epsilon_{2}$ and $\epsilon_{3}$ with $K=3$ categories is given by 
	\[ \Lambda^{2,3} = \delta \begin{bmatrix}
	   p_{1}(1-p_{1}) & -p_{1}p_{2} & -p_{1}p_{3} \\
	   -p_{1}p_{2} & p_{2}(1-p_{2}) & -p_{2}p_{3} \\
	   -p_{1}p_{3} & -p_{2}p_{3} & p_{3}(1-p_{3})
	\end{bmatrix}\]
	
	Note that if $\epsilon_{2}$ and $\epsilon_{3}$ are independent, then the cross-covariance matrix is all zeros, because $\delta = 0$. 
\end{example}

\section{Dependency Generators}

Both FK-dependency and sequential dependency structures are particular examples of a class of \textit{vertical dependency structures}. We denote the dependency of subsequent categorical random variables on a previous variable in the sequence as a \textit{vertical dependency}. In this section, we define a class of functions that generate vertical dependency structures with the property that all the variables in the sequence are identically distributed but dependent. 

\subsection{Vertical Dependency Generators Produce Identically Distributed Sequences}
\label{subsec: vert dep generators id}
Define the set of functions $\mathcal{C}_{\delta} = \{\alpha: \N_{\geq 2} \to \N : \alpha(n) \geq 1 \wedge \alpha(n) < n \frall n\}$. The mapping defines a direct dependency of $n$ on $\alpha(n)$, denoted $n \stackrel{\delta}{\rightsquigarrow} \alpha(n)$. We have already defined the notion of direct dependency, but we formalize it here. 

\begin{definition}[Direct Dependency]
	Let $\epsilon_{m}$, $\epsilon_{n}$ be two categorical random variables in a sequence, where $m < n$. We say $\epsilon_{n}$ has a \textbf{direct dependency} on $\epsilon_{m}$, denoted $\epsilon_{n} \stackrel{\delta}{\rightsquigarrow} \epsilon_{m}$, if $P(\epsilon_{n}| \epsilon_{n-1},...,\epsilon_{1}) = P(\epsilon_{n} | \epsilon_{m})$. 
\end{definition}

\begin{example}
	For FK-dependence, $\alpha(n) \equiv 1$. That is, for any $n$,  $\epsilon_{n} \stackrel{\delta}{\rightsquigarrow} \epsilon_{1}$
\end{example}

\begin{example}
	The function $\alpha(n) = n-1$ generates the sequential dependency structure of Section~\ref{sec: Sequential dependency}. 
\end{example}

We now define the notion of dependency continuity. 
\begin{definition}[Dependency Continuity]
	A function $\alpha: \N_{\geq 2} \to \N$ is \textit{dependency continuous} if $\frall n \exists j \in \{1,...,n-1\}$ such that $\alpha(n) = j$
\end{definition}

We require that the functions in $\mathcal{C}_{\delta}$ be dependency continuous. Thus, the formal definition of the class of dependency generators $\mathcal{C}_{\delta}$ is 
\begin{definition}[Dependency Generator]
	We define the set of functions $\mathcal{C}_{\delta} = \{\alpha : \N_{\geq 2} \to \N\}$ such that 
	\begin{itemize}
		\item  $\alpha(n) < n$
		\item  $\frall n \exs j \in \{1,...,n-1\} : \alpha(n) = j$ 
	\end{itemize}
and refer to this class as \textbf{dependency generators}.
\end{definition} Each function in $\mathcal{C}_{\delta}$ generates a unique dependency structure for sequences of dependent categorical random variables where the individual elements of the sequence remain identically distributed. 

\begin{theorem}
    Let $\alpha \in \mathcal{C}_{\delta}$. Then for any $n\in \N, n \geq 2$, the dependent categorical random sequence generated by $\alpha$ has identically distributed elements. 
    \label{thm: id dependency generators}
\end{theorem}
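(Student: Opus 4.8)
The plan is to argue by induction on $n$, mirroring the proof of Lemma~\ref{lemma: identically distributed seq dep cat vars} but with the single relevant predecessor being $\epsilon_{\alpha(n)}$ in place of $\epsilon_{n-1}$ (sequential case) or $\epsilon_{1}$ (FK case). Throughout, the joint law of $(\epsilon_{1},\dots,\epsilon_{n})$ is the one determined by $\alpha$, namely $P(\epsilon_{1}=\omega_{1},\dots,\epsilon_{n}=\omega_{n}) = p_{\omega_{1}}\prod_{k=2}^{n}P(\epsilon_{k}=\omega_{k}\mid\epsilon_{\alpha(k)}=\omega_{\alpha(k)})$ with the weights $p_{j}^{+}=p_{j}+\delta(1-p_{j})$ and $p_{j}^{-}=p_{j}-\delta p_{j}$. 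The one algebraic identity I will reuse is
\[
p_{j}^{+}p_{j}+p_{j}^{-}(1-p_{j}) = \bigl(p_{j}+\delta(1-p_{j})\bigr)p_{j}+\bigl(p_{j}-\delta p_{j}\bigr)(1-p_{j}) = p_{j},
\]
which is Lemma~1 of~\cite{traylor} and already drives the computations in Lemma~\ref{lemma: identically distributed seq dep cat vars}.

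First I would record a \emph{consistency} observation: since $\alpha(k)<k$ for every $k$, summing the joint mass function over $\omega_{n}$ collapses its last factor (a conditional distribution, summing to $1$) and leaves precisely the joint law of $(\epsilon_{1},\dots,\epsilon_{n-1})$ generated by $\alpha$; iterating, the marginal law of an initial segment $(\epsilon_{1},\dots,\epsilon_{m})$ does not depend on the ambient length $n\geq m$. Consequently it suffices to prove, for each $n\geq 2$, that $P(\epsilon_{n}=i)=p_{i}$ for $i=1,\dots,K$, and the induction hypothesis may be applied to $\epsilon_{\alpha(n)}$ inside whatever length of sequence is convenient. Combined with $P(\epsilon_{1}=i)=p_{i}$, which holds by definition, this gives identically distributed elements.

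For the base case $n=2$, the constraints $1\leq\alpha(2)<2$ force $\alpha(2)=1$, so $\epsilon_{2}\stackrel{\delta}{\rightsquigarrow}\epsilon_{1}$, and conditioning on $\epsilon_{1}$ gives $P(\epsilon_{2}=j)=p_{j}^{+}p_{j}+p_{j}^{-}\sum_{i\neq j}p_{i}=p_{j}^{+}p_{j}+p_{j}^{-}(1-p_{j})=p_{j}$. For the inductive step, assume $P(\epsilon_{\ell}=i)=p_{i}$ for all $\ell<n$ and all $i$. Dependency continuity yields $\alpha(n)=:m\in\{1,\dots,n-1\}$, so $\epsilon_{m}$ is an actual element of the sequence and, by the induction hypothesis together with the consistency observation, $P(\epsilon_{m}=i)=p_{i}$. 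Marginalizing out every coordinate except $\epsilon_{n}$ and $\epsilon_{m}$ and using the direct dependency $\epsilon_{n}\stackrel{\delta}{\rightsquigarrow}\epsilon_{m}$, the law of total probability over the value of $\epsilon_{m}$ gives
\[
P(\epsilon_{n}=j) = \sum_{i=1}^{K}P(\epsilon_{n}=j\mid\epsilon_{m}=i)\,P(\epsilon_{m}=i) = p_{j}^{+}p_{j}+p_{j}^{-}\sum_{i\neq j}p_{i} = p_{j},
\]
by the identity above; since $j$ was arbitrary, $\epsilon_{n}$ has the common law $p$, closing the induction.

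The step I expect to need the most care is the consistency observation — that an initial segment's marginal is well defined independently of the total length and equals the law $\alpha$ generates on that segment. This is exactly where the structural hypothesis $\alpha(n)<n$ (a causal/DAG ordering with no forward edges) is indispensable, and it is also what legitimizes the single conditioning step: the conditional $P(\epsilon_{n}=j\mid\epsilon_{m}=i)$ obtained after marginalizing out the other coordinates is genuinely $p_{j}^{+}$ or $p_{j}^{-}$, not an expression depending on the rest of the sequence. The \emph{WLOG} reduction to a fixed outcome $j$ and the passage from $K=2$ to general $K$ are handled verbatim as in Lemma~\ref{lemma: identically distributed seq dep cat vars}, so no additional ideas are required there.
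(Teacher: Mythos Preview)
Your argument is correct. It differs in organization from the paper's proof: the paper traces the chain $\epsilon_{n}\stackrel{\delta}{\rightsquigarrow}\epsilon_{\alpha(n)}\stackrel{\delta}{\rightsquigarrow}\epsilon_{\alpha(\alpha(n))}\stackrel{\delta}{\rightsquigarrow}\cdots$ back until it hits $\epsilon_{1}$ (or $\epsilon_{2}$), observes that this path is itself a sequentially dependent subsequence, and then invokes Lemma~\ref{lemma: identically distributed seq dep cat vars} wholesale to conclude that every node on the path shares the law of $\epsilon_{1}$. You instead run a one-step strong induction: assuming every $\epsilon_{\ell}$ with $\ell<n$ already has law $p$, you condition on the single parent $\epsilon_{\alpha(n)}$ and apply the identity $p_{j}^{+}p_{j}+p_{j}^{-}(1-p_{j})=p_{j}$ once. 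Your route is more self-contained---it uses only the algebraic kernel of Lemma~\ref{lemma: identically distributed seq dep cat vars} rather than the lemma as a black box---and your consistency observation (that $\alpha(k)<k$ makes the joint law factor causally, so marginalizing the last coordinate returns the law on the shorter prefix) makes explicit a point the paper leaves implicit when it asserts that the traced-back path ``is'' a sequentially dependent sequence. The paper's approach, on the other hand, makes the graphical picture (every vertex has a directed path to $1$) more visible and ties the general theorem directly to the special case already established.
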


\begin{proof}
	Let $\alpha$ be a dependency continuous function in $\mathcal{C}_{\delta}$. Then $\alpha(2) = 1 \Rightarrow \epsilon_{2} \stackrel{\delta}{\rightsquigarrow} \epsilon_{1}$. Thus, $\epsilon_{2}$ and $\epsilon_{1}$ are a sequentially dependent sequence of length $2$ and identically distributed. Now, $\alpha(3) \in \{1,2\}$, and thus either $\epsilon_{3}  \stackrel{\delta}{\rightsquigarrow} \epsilon_{1}$, in which case $\epsilon_{3}$ and $\epsilon_{1}$ are identically distributed, and thus $\epsilon_{1}, \epsilon_{2}$, and $\epsilon_{3}$ are identically distributed, or $\epsilon_{3}  \stackrel{\delta}{\rightsquigarrow} \epsilon_{2}$. But since $\epsilon_{2}  \stackrel{\delta}{\rightsquigarrow} \epsilon_{1}$, it follows that $\epsilon_{3}  \stackrel{\delta}{\rightsquigarrow} \epsilon_{2}\stackrel{\delta}{\rightsquigarrow} \epsilon_{1}$, which is a sequentially dependent sequence of length $3$, and thus all elements are identically distributed. Now, for and $n > 3$, $\alpha(n) = j \in \{1,...,n-1\}$. If $j=1$ or $2$, then we are done. Suppose then that $j > 2$. Then by dependency continuity, $\exists l \in \{1,...,j-1\}$ such that $\alpha(j) = l$. If $l = 1$ or $2$, then $\epsilon_{n}$ is in a sequentially dependent subsequence of length $4$ or $5$, respectively, and is thus identically distributed within the subsequence. If $l \geq 3$, then $\exists m \in \{1,...,l-1\}$ such that $\alpha(l) = m$. We may proceed in this fashion until we reach a $q$ such that $\alpha(q) = 1$ or $2$, which is forced by dependency continuity. Thus, for any $n$, $\epsilon_{n}$ is appended to a sequentially dependent subsequence that is identically distributed among its members and with $\epsilon_{1}$. Thus all subsequences are identically distributed with $\epsilon_{1}$ and hence each other as well. Thus, the entire sequence is identically distributed with $P(\epsilon_{n} = i) = p_{i}, i = 1,...,K \frall n$. 
\end{proof}

\subsection{Graphical Interpretation and Illustrations}

We may visualize the dependency structures generated by $\alpha \in \mathcal{C}_{\delta}$ via directed \textit{dependency graphs}. Each $\epsilon_{n}$ represents a vertex in the graph, and a directed edge connects $n$ to $\alpha(n)$ to represent the direct dependency generated by $\alpha$. This section illustrates some examples and gives a graphical interpretation of the result in Theorem~\ref{thm: id dependency generators}. 

\subsubsection{First-Kind Dependence}

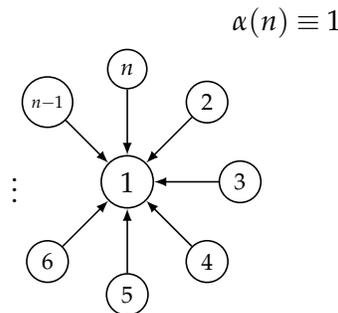
\begin{figure}[H]
	\centering
	\begin{tikzpicture}[->,node distance=1.5cm,semithick]
	\pgfsetarrowsend{latex} 
	\tikzstyle{every state}=[fill=none,text=black] 
	\node  (A) [draw,circle]{$1$}; 
	\node  (B) [draw,circle,above right of=A,scale=.8] {$2$}; 
	\node  (C) [draw,circle,right of=A,scale=.8] {$3$}; 
	\node  (D) [draw,circle,below right of=A,scale=.8] {$4$}; 
	\node  (E) [draw,circle,below of=A,scale=.8] {$5$}; 
	\node  (F) [draw,circle,below left of=A,scale=.8] {$6$}; 
	\node  (G) [left of=A,scale=1] {$\vdots$};
	\node  (H) [draw,circle,above left of=A,scale=.6] {$n\!\!-\!\!1$};  
	\node  (I) [draw,circle,above of=A,scale=.8] {$n$}; 
	\path (B) edge node {} (A) ;
	\path (C) edge node {} (A) ;
	\path (D) edge node {} (A) ;
	\path (E) edge node {} (A) ;
	\path (F) edge node {} (A) ;
	\path (H) edge node {} (A) ;
	\path (I) edge node {} (A) ;
	\node[above right of =1cm] at (B) {$\alpha(n)\equiv 1$}; 
	\end{tikzpicture} 
	\caption{Dependency Graph for FK Dependence}
	\label{fig: FK dependence graph}
\end{figure}

For FK-dependency, $\alpha(n) \equiv 1$ generates the graph in Figure~\ref{fig: FK dependence graph}. Each $\epsilon_{i}$ depends directly on $\epsilon_{1}$, and thus we see no connections between any other vertices $i, j$ where $j \neq 1$. There are $n-1$ separate subsequences of length $2$ in this graph. 

\subsubsection{Sequential Dependency}

\begin{figure}[H]
	\centering
	\begin{tikzpicture}[->,node distance=1.2cm,semithick] 
	\pgfsetarrowsend{latex} 
	\tikzstyle{every state}=[fill=none,text=black,bend left] 
	\node (A) [draw,circle]{$1$}; 
	\node  (B) [draw,circle,right of=A] {$2$}; 
	\node  (C) [draw,circle,right of=B] {$3$}; 
	\node (D) [right of=C,scale=.7] {$\cdots$};  
	\node  (E) [draw,circle,right of=D,scale=.6] {$n\!-\!1$};
	\node  (F) [draw,circle,right of=E] {$n$};     
	\path (B) edge  node {} (A) ;
	\path (C) edge   node {} (B) ;
	\path (D) edge   node {} (C) ;
	\path (E) edge   node {} (D) ;
	\path (F) edge   node {} (E) ;
	\node[below=0.5cm] at (C) {$\alpha(n)=n-1$}; 
	\end{tikzpicture} 
	\caption{Dependency Graph for Sequential Dependence}
	\label{fig: seq dep graph}
\end{figure}
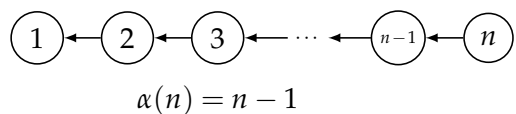

$\alpha(n) = n-1$ generates the sequential dependency structure we studied in Section~\ref{sec: Sequential dependency}. We can see that a path exists from any $n$ back to 1. This  is a visual way to see the result of Theorem~\ref{thm: id dependency generators}, in that if a path exists from any $n$ back to 1, then the variables in that path must be identically distributed. Here, there is only one sequence and no subsequences. 

\subsubsection{A Monotonic Example}

\begin{figure}[H]
	\centering
	\begin{tikzpicture}
	\pgfsetarrowsend{latex}
	\tikzstyle{every state}=[fill=none,text=black] 
	\node (A) [draw,circle]{$1$}; 
	\node (B) [draw,circle,right of=A,node distance=1.5cm,scale=1] {$3$}; 
	\node (C) [draw,circle,left of=A,node distance=1.5cm,scale=1] {$2$}; 
	\node (D) [draw,circle,above left of=C,node distance=1.5cm,scale=.8] {$4$};  
	\node (E) [draw,circle,below of=D,node distance=.8cm,scale=.8] {$5$};
	\node (F) [draw,circle,below of=E,node distance=.8cm,scale=.8] {$6$}; 
	\node (G) [draw, circle,below of=F,node distance=.8cm,scale=.8] {$7$};
	\node (H) [draw,circle,below of=G,node distance=.8cm,scale=.8] {$8$};    
	\node (I) [draw,circle,above right of=B,node distance=2.25cm,scale=.8] {$10$};  
	\node (J) [draw,circle,above of=I,node distance=.8cm,scale=.8] {$9$};
	\node (K) [draw,circle,below of=I,node distance=.8cm,scale=.8] {$11$}; 
	\node (L) [draw,circle,below of=K,node distance=.8cm,scale=.8] {$12$};
	\node (M) [draw,circle,below of=L,node distance=.8cm,scale=.8] {$13$}; 
	\node (N) [draw,circle,below of=M,node distance=.8cm,scale=.8] {$14$};
	\node (O) [draw,circle,below of=N,node distance=.8cm,scale=.8] {$15$};
	\node (P) [draw,circle,above left of=D,node distance=1cm,scale=.6] {$16$};
	\node (Q) [below of=P,node distance=.5cm,scale=.6] {$\vdots$};
	\node (R) [draw,circle,below of=Q,node distance=.5cm,scale=.6] {$24$};
	\path (B) edge   node {} (A) ;
	\path (C) edge   node {} (A) ;
	\path (D) edge   node {} (C) ;
	\path (E) edge   node {} (C) ;
	\path (F) edge   node {} (C) ;
	\path (G) edge   node {} (C) ;
	\path (H) edge   node {} (C) ;
	\path (I) edge   node {} (B) ;
	\path (J) edge   node {} (B) ;
	\path (K) edge   node {} (B) ;
	\path (L) edge node {} (B) ;
	\path (M) edge   node {} (B) ;
	\path (N) edge   node {} (B) ;
	\path (O) edge   node {} (B) ;
	\path (P) edge   node {} (D) ;
	\path (R) edge   node {} (D) ;
	\node (S) [below of=O,node distance=.5cm,scale=.6] {};
	\node[above=1cm] at (A) {$\alpha(n)=\lfloor\sqrt{n}\rfloor$}; 
	\end{tikzpicture}
	\caption{Dependency Graph for $\alpha(n) = \lfloor \sqrt{n} \rfloor$}
	\label{fig: dependency graph for sqrt(n)}
\end{figure}
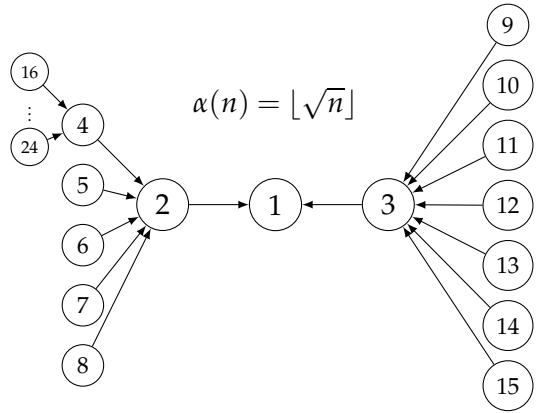

Figure~\ref{fig: dependency graph for sqrt(n)} gives an example of a monotonic function in $\mathcal{C}_{\delta}$ and the dependency structure it generates. Again, note that any $n$ has a path to 1, and the number of subsequences is between 1 and $n-1$. 

\subsubsection{A Nonmonotonic Example }

\begin{figure}[H]
	\centering
	\begin{tikzpicture}
	\pgfsetarrowsend{latex} 
	\tikzstyle{every state}=[fill=none,text=black,scale=.8] 
	\node  (A) [draw,circle]{$1$}; 
	\node  (B) [draw,circle,left of=A,node distance=1cm,scale=.8] {$2$}; 
	\node  (C) [draw,circle,right of=A,node distance=1cm,scale=.8] {$3$}; 
	\node  (D) [draw,circle,above of=A,node distance=1cm,scale=.8] {$5$};  
	\node  (E) [draw,circle,below of=A,node distance=1cm,scale=.8] {$4$};
	\node  (F) [draw,circle,below of=E,node distance=1cm,scale=.6] {$7$}; 
	\node  (G) [draw,circle,below of=F,node distance=1cm,scale=.6] {$13$};
	\node  (H) [draw,circle, below right of=E,node distance=1cm,scale=.6] {$10$};
	\node  (I) [draw,circle,left of=B,node distance=1cm,scale=.6] {$6$};  
	\node  (J) [draw,circle,above left of=D,node distance=1cm,scale=.6] {$8$};
	\node  (K) [draw,circle,above of=D,node distance=1cm,scale=.6] {$9$}; 
	\node  (L) [draw,circle,above right of=D,node distance=1cm,scale=.6] {$12$};
	\node  (M) [draw,circle,right of=C,node distance=1cm,scale=.6] {$11$}; 
	\path (B) edge   node {} (A) ;
	\path (C) edge   node {} (A) ;
	\path (D) edge   node {} (A) ;
	\path (E) edge   node {} (A) ;
	\path (F) edge   node {} (E) ;
	\path (G) edge   node {} (F) ;
	\path (H) edge   node {} (E) ;
	\path (I) edge   node {} (B) ;
	\path (J) edge   node {} (D) ;
	\path (K) edge   node {} (D) ;
	\path (L) edge   node {} (D) ;
	\path (M) edge   node {} (C) ;
	\end{tikzpicture} 
	\caption{Dependency Graph for $\alpha(n) = \left\lfloor\frac{\sqrt{n}}{2}\left(\sin(n)\right) + \frac{n}{2}\right\rfloor$}
	\label{fig: graph for nonmonotonic}
\end{figure}
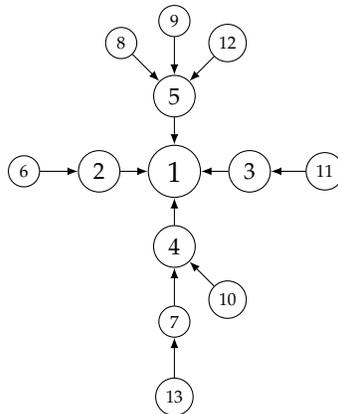

Figure~\ref{fig: graph for nonmonotonic} illustrates a more contrived example where the function is nonmonotonic. It is neither increasing nor decreasing. The previous examples have all been nondecreasing. 

\subsubsection{A Prime Example}

Let $\alpha \in \mathcal{C}_{\delta}$ be defined in the following way. Let $p_{m}$ be the $m$th prime number, and let $\{kp_{m}\}$ be the set of all postive integer multiples of $p_{m}$. Then the set $\mathscr{P}_{m} = \{kp_{m}\} \setminus \cup_{i=1}^{m-1}\{kp_{i}$ gives a disjoint partitioning of $\N$.  That is, $\N = \sqcup_{m}\mathscr{P}_{m}$, and thus for any $n \in \N$, $n \in \mathscr{P}_{m}$ for exactly one $m$. Now let $\alpha(n) = m$. Thus, the function is well-defined. We may write $\alpha: \N_{\geq 2} \to \N$ as $\alpha[\mathscr{P}_{m}] = m$. As an illustration,
\begin{align*}
\alpha[\{2k\}] &= 1 \\
\alpha[\{3k\}\setminus \{2k\}] &= 2 \\
\alpha[\{5k\}\setminus (\{2k\}\cup \{3k\})] &= 3 \\
&\hspace{.17cm}\vdots\\
\alpha[\{kp_{m}\}\setminus (\cup_{i=1}^{m-1}\{kp_{i}\})] &= m
\end{align*}

\section{Conclusion}
\label{sec: conclusion}

This paper has extended the concept of dependent random sequences first put forth in the works of Korzeniowski~\cite{Korzeniowski2013} and Traylor~\cite{traylor} and developed a generalized class of vertical dependency structures. The sequential dependency structure was studied extensively, and a formula for the cross-covariance obtained. The class of dependency generators was defined in Section~\ref{subsec: vert dep generators id} and shown to always produce a unique dependency structure for any $\alpha \in \mathscr{C}_{\delta}$ in which the a sequence of categorical random variables under that $\alpha$ is identically distributed but dependent. We provided a graphical interpretation of this class, and illustrated with several key examples. 
\section{Appendix}
\label{sec: Appendix}

\subsection{Proof of Theorem~\ref{thm: CCV categorical}}
The proof can be divided into two statements: 
\[\begin{cases}
\text{Statement 1}:& Cov([\epsilon_{m} = i], [\epsilon_{n} = i]) = p_{i}(1-p_{i})\delta^{n-m} \\
\text{Statement 2}:& Cov([\epsilon_{m} = i], [\epsilon_{n} = j]) = -p_{i}p_{j}\delta^{n-m}; \quad m > 1, n > m 
\end{cases}\]

We prove Statement 1 first for $m=1$. It suffices to show that $P(\epsilon_{1} = i, \epsilon_{n} = i) = p_{i}(p_{i} + \delta^{n-1}(1-p_{i}))$. This will be shown via induction, though the base case is 6. $n=2,..,5$ are calculated directly.  Fix the number of categories $K$, and denote the sample space of a sequentially dependent categorical sequence of length $n$ with $K$ categories by $\Omega_{n}$. Fix $i \in \{1,...,K\}$
Denote the following sets: 
\begin{align*}
\Theta_{n}^{(i)} &= \{\omega \in \Omega_{n} : \omega_{1} = \omega_{n} = i\}\\
\Phi_{n}^{(i)} &= \{\omega \in \Omega_{n} : \omega_{n} = i\}
\end{align*}

For $n=2$, $P(\epsilon_{1} = i, \epsilon_{2} = i) = p_{i}p_{i}^{+} = p_{i}(p_{i} + \delta(1-p_{i}))$, $i=1,...,K$. For $n=3$, $\Theta_{3}^{i} = \{i\} \times \Phi_{2}^{(i)}$. Then 
\[P(\epsilon_{1} = i, \epsilon_{3}= i) = P(\Theta_{3}^{i}) = p_{i}(p_{i}^{+}p_{i}^{+}) + p_{i}\left(\sum_{j \neq i}p_{j}^{-}\right)p_{i}^{-} = p_{i}(p_{i} + (1-p_{i})\delta^{2})\]
For $n=4$,
\begin{align*}
\Theta_{4}^{(i)} &= \{i\} \times \Phi_{3}^{(i)} \\
			&= \{i\} \times \left(\bigcup_{j=1}^{K}\{j\}\right) \times \Phi_{2}^{(i)} \\
			&= \left[\{i\} \times \Theta_{3}^{(i)}\right] \cup \left[\bigcup_{j \neq i}\{i\} \times \{j\} \times \Phi_{2}^{(i)}\right]
\end{align*}

Let $\epsilon_{(-k)}$ be a sequence with the first $k$ terms missing. 

\begin{align*}
P(\Theta_{4}^{(i)}) &= P(\epsilon_{(-1)} \in \Theta_{3}^{(i)}|\epsilon_{1} = i)P(\epsilon_{1} = i) + \sum_{j \neq i}P(\epsilon_{(-2)} \in \Phi_{2}^{(i)}|\epsilon_{1} = i, \epsilon_{2} = j)P(\epsilon_{2} = j | \epsilon_{1} = i)P(\epsilon_{1} = i) \\
 &= p_{i}p_{i}^{+}(p_{i} + \delta^{2}(1-p_{i}) + \sum_{j \neq i})p_{i}p_{j}^{-}p_{i}^{-}(1+\delta) \\
 &= p_{i}p_{i}^{+}(p_{i} + \delta^{2}(1-p_{i}) + p_{i}(1-p_{i})(1-\delta-\delta^{2} + \delta^{3}) \\
 &= p_{i}(p_{i} + (1-p_{i})\delta^{3})
\end{align*}

The calculations for $n=5$ proceed in a similar fashion. $P(\Theta_{5}^{(i)}) = p_{i}(p_{i} + (1-p_{i})\delta^{4})$.
Then for $n=6$, which is our true base case for the inductive hypothesis,
\begin{align*}
\Theta_{6}^{(i)} &= \{i\} \times \Theta_{5} \cup \left(\bigcup_{j\neq i}\{i\} \times \{j\} \times \Phi_{4}^{(i)}\right) \\
\end{align*}

We already know that $P(\epsilon_{(-1)} \in \Theta_{5}|\epsilon_{1}= i) = p_{i}p_{i}^{+}(p_{i} + (1-p_{i})\delta^{4})$. Now, fix $j$. Then we may partition $\{i\} \times \{j\} \times \Phi_{4}^{(i)}$. 
\begin{equation}
\begin{aligned}
\{i\} \times \{j\} \times \Phi_{4}^{(i)} &= \left[\{i\} \times \{j\} \times \Theta_{4}^{(i)}\right] \cup \left[\{i\} \times \{j\} \times \{j\} \times \Phi_{3}^{i}\right] \cup \left(\bigcup_{k \neq i,j} \left[\{i\} \times \{j\} \times \{k\} \times \Phi_{3}^{(i)}\right]\right)
\end{aligned}
\end{equation}

Then we calculate the probability of each partition separately.
\begin{equation}
\begin{aligned}
P\left(\{i\} \times \{j\} \times \Theta_{4}^{(i)}\right) &= p_{i}p_{j}^{-}p_{i}^{-}(p_{i} + (1-p_{i})\delta^{3}) 
\end{aligned}
\label{eq: i j Theta 4}
\end{equation}
\begin{equation}
\begin{aligned}
P\left(\{i\} \times \{j\} \times \{j\} \times \Phi_{3}^{i}\right) &= p_{i}p_{j}^{-}p_{j}^{+}P(\epsilon_{(-3)} \in \Phi_{3}^{(i)} | \epsilon_{1} = i, \epsilon_{2} = \epsilon_{3} = j)  \\
&= p_{i}(p_{i}p_{j}(1-\delta)(1-\delta^{3})(p_{j} + (1-p_{j})\delta)) 
\end{aligned}
\label{eq: i j j Phi 3}
\end{equation}
and for fixed $k \neq i,j$,
\begin{equation}
\begin{aligned}
P\left(\{i\} \times \{j\} \times \{k\} \times \Phi_{3}^{(i)}\right) &= p_{i}p_{j}^{-}p_{k}^{-}P(\epsilon_{(-3)} \in \Phi_{3})| \epsilon_{1} = i, \epsilon_{2} = j, \epsilon_{3}= k) \\
&= p_{i}\left(\prod_{l=1}^{K}p_{l}\right)(1-\delta)^{2}(1-\delta^{3})
\end{aligned}
\label{eq: i j k Phi3}
\end{equation}

Then $P\left(\bigcup_{k \neq i,j} \left[\{i\} \times \{j\} \times \{k\} \times \Phi_{3}^{(i)}\right]\right)$ is obtained by summing the probabilities in~\eqref{eq: i j Theta 4} -~\eqref{eq: i j k Phi3} over all $j \neq i$. That is,

\begin{align*}
P\left(\bigcup_{k \neq i,j} \left[\{i\} \times \{j\} \times \{k\} \times \Phi_{3}^{(i)}\right]\right) &= \sum_{\substack{j=1\\j\neq i}}^{K}\left(p_{i}p_{j}^{-}p_{i}^{-}(p_{i} + (1-p_{i})\delta^{3}) +  p_{i}(p_{i}p_{j}(1-\delta)(1-\delta^{3})(p_{j} + (1-p_{j})\delta))  \right. \\
&\qquad\left.+ \sum_{l \neq i,j}p_{i}\left(\prod_{l=1}^{K}p_{l}\right)(1-\delta)^{2}(1-\delta^{3})\right) \\
&= p_{i}(1-p_{i})(1-\delta -\delta^{4} + \delta^{5})
\end{align*}

Therefore, 
\begin{align*}
P\left(\Theta_{6}^{(i)}\right) &= p_{i}p_{i}^{+}(p_{i}+(1-p_{i})\delta^{4}) +  p_{i}(1-p_{i})(1-\delta -\delta^{4} + \delta^{5}) \\
			&= p_{i}(p_{i} + (1-p_{i})\delta^{5})
\end{align*}

For the inductive hypothesis, assume for $6 \leq m \leq n$, 

\begin{equation}
P(\epsilon_{(-1)} \in \Theta_{n-1}|\epsilon_{1} = i) = p_{i}^{+}(p_{i} + (1-p_{i})\delta^{n-2})
\label{eq: ind hyp 1}
\end{equation}

\begin{equation}
P(\epsilon_{(-1)} \in \Theta_{n-1}|\epsilon_{1} \neq i) = p_{i}^{-}(p_{i} + (1-p_{i})\delta^{n-2})
\label{eq: ind hyp 2}
\end{equation}

\begin{equation}
P(\epsilon_{(-3)} \in \Phi_{n-3}^{(i)} | \epsilon_{1} = i, \epsilon_{2} = \epsilon_{3} = j) = p_{j}(1-\delta^{n-3})
\label{eq: ind hyp 3}
\end{equation}

and for $k \neq i,j$,
\begin{equation}
P(\epsilon_{(-3)} \in \Phi_{n-3}^{(i)} | \epsilon_{1} = i, \epsilon_{2} = j\epsilon_{3} = k) = (1-\delta^{n-3})\prod_{l \neq j,k}p_{l}
\label{eq: ind hyp 4}
\end{equation}

Then $P(\epsilon_{1}= i, \epsilon_{n+1} = i) = P(\Theta_{n+1}^{(i)})$. Now, 
\begin{align*}
\Theta_{n+1}^{(i)} &= \{i\} \times \Phi_{n}^{(i)}\\
					&= \left(\{i\} \times \Theta_{n}\right) \cup \left(\bigcup_{j \neq i}\{i\} \times \{j\}\times \Phi_{n-1}\right)		
\end{align*}
Then we can break $\{i\} \times \Theta_{n}$ down further and express it as 
\begin{align}
\{i\} \times \Theta_{n} &= \left(\{i\} \times \{i\} \times \Theta_{n-1}\right) \cup \left(\bigcup_{k\neq i}\{i\} \times \{k\} \times \Phi_{n-1}\right)
\end{align}

We can then see that $P(\{i\} \times \{i\} \times \Theta_{n-1}) = p_{i}p_{i}^{+}(p_{i} + (1-p_{i})\delta^{n-2})$ by the inductive hypothesis. Then, we break down (for fixed $k$) $\{i\} \times \{k\} \times \Phi_{n-1}$ into 
\begin{align}
\{i\} \times \{k\} \times \Phi_{n-1} &=\left( \{i\} \times \{k\} \times \Theta_{n-1} \right) \cup \left(\bigcup_{l \neq i} \{i\} \times \{k\} \times \{l\} \times \Phi_{n-2}\right)
\end{align}

By the inductive hypothesis, we may calculate (in a similar manner as for the base case $n=6$) that 
\[P(\bigcup_{k\neq i}\{i\} \times \{k\} \times \Phi_{n-1}) = p_{i}(1-p_{i})(1-\delta - \delta^{n-2} + \delta^{n-1})\]
and therefore 
\[P(\{i\} \times \Theta_{n}) = p_{i}p_{i}^{+}(p_{i} + (1-p_{i})\delta^{n-2}) + p_{i}(1-p_{i})(1-\delta - \delta^{n-2} + \delta^{n-1}) = p_{i}(p_{i} + (1-p_{i})\delta^{n-1})\]

In a similar manner, we can break down $\{i\} \times \{j\}\times \Phi_{n-1}$ for fixed $j$, and, using the inductive hypotheses (and some very tedious arithmetic), calculate that 
\[P\left(\bigcup_{k\neq i}\{i\} \times \{k\} \times \Phi_{n-1}\right) = p_{i}(1-p_{i})(1-\delta -\delta^{n-1} + \delta^{n})\]

Combining yields $P(\Theta_{n+1}^{(i)} ) = p_{i}(p_{i} + (1-p_{i})\delta^{n})$ and the statement is proven. 

To prove the Statement 1 for $m > 1$, we abuse notation and redefine the set $\Theta_{n-m}^{(i)} = \{\omega \in \Omega_{n} : \omega_{m} = i, \omega_{n} = i\}$. Then it also suffices to show that $P(\Theta_{n-m}^{(i)}) = p_{i}(1-p_{i})\delta^{n-m}$. 
\[\Theta_{n-m}^{(i)} = \Omega_{m} \times \Theta_{n-m}\]
where $\Omega_{m}$ consists of sequences (or subsequences) of length $m$ and ending at index $m-1$. Then 
\begin{align}
\Omega_{m} \times \Theta_{n-m}^{(i)} &= \bigcup_{j=1}^{K}\Phi_{m}^{(j)} \times \Theta_{n-m}^{(i)} 
\end{align}
Now, since the sequential variables are identically distributed, $P(\Phi_{m}^{j}) = P(\epsilon_{m-1} = j) = p_{j}$. Thus,

\begin{align*}
P(\Theta_{n-m}^{(i)}) &= p_{i}(p_{i}^{+}(p_{i} + (1-p_{i})\delta^{n-m})) + \sum_{j \neq i}p_{j}p_{i}^{-}(p_{i} + (1-p_{i})\delta^{n-m})) \\
		&= p_{i}(p_{i} + (1-p_{i})\delta^{n-m})
\end{align*}

The proof of Statement 2 follows in a similar fashion.

\phantomsection
\section*{Acknowledgments} 

\addcontentsline{toc}{section}{Acknowledgments} 

The authors are grateful to the DPD CTO of Dell EMC and Stephen Manley for continued funding and support. 

\phantomsection
 
 \bibliographystyle{plain}
 \bibliography{vertical-dependency}


\end{document}